\newcommand{\reals}{\mbox{$\mathbb R$}}
\newcommand{\comps}{\mbox{$\mathbb C$}}
\newcommand{\nats}{\mbox{$\mathbb N$}}
\newcommand{\comment}[1]{}
\def\squarebox#1{\hbox to #1{\hfill\vbox to #1{\vfill}}}
\def\qed{\hspace*{\fill}
        \vbox{\hrule\hbox{\vrule\squarebox{.667em}\vrule}\hrule}\smallskip}
\newenvironment{proof}{\begin{trivlist}
  \item[\hspace{\labelsep}{\em\noindent Proof.~}]
  }{\qed\end{trivlist}}
\newtheorem{lemma}{Lemma}[section]
\newtheorem{theorem}[lemma]{Theorem}
\newtheorem{corollary}[lemma]{Corollary}
\newtheorem{definition}[lemma]{Definition}
\newtheorem{conjecture}[lemma]{Conjecture}
\def\squareforqed{\hbox{\rlap{$\sqcap$}$\sqcup$}}
\def\qed{\ifmmode\squareforqed\else{\unskip\nobreak\hfil
\penalty50\hskip1em\null\nobreak\hfil\squareforqed
\parfillskip=0pt\finalhyphendemerits=0\endgraf}\fi}
\newlength{\tablength}
\newlength{\spacelength}
\newcommand{\tabstar}{\hspace*{\tablength}}
\newcommand{\spacestar}{\hspace*{\spacelength}}
\def\obeytabs{\catcode`\^^I=\active}
{\obeytabs\global\let^^I=\tabstar}
{\obeyspaces\global\let =\spacestar}
\newenvironment{display}{\begingroup\obeylines\obeyspaces\obeytabs}{\endgroup}
\newenvironment{prog}{\begin{display}\parskip0pt\sf}{\end{display}}
\title{On the maximum number of edges of non-flowerable coin graphs}
\author{
{\sl Geir Agnarsson}
\thanks{Department of Mathematical Sciences,
George Mason University,
MS 3F2,
4400 University Drive,
Fairfax, VA -- 22030, USA,
{\tt geir@math.gmu.edu}}
\and
{\sl Jill Bigley Dunham}
\thanks{Department of Mathematics,
Hood College, 401 Rosemont Avenue, Frederick, MD -- 21701, USA,
{\tt dunham@hood.edu}} 
}
\date{}
\begin{document}

\maketitle

\begin{abstract}
For $n\in\nats$ and $3\leq k\leq n$ we compute the exact value
of $E_k(n)$, the maximum number of edges of a simple planar
graph on $n$ vertices where each vertex bounds an $\ell$-gon
where $\ell\geq k$. The lower bound of $E_k(n)$ is obtained
by explicit construction, and the matching upper bound is
obtained by using Integer Programming (IP.) We then use
this result to conjecture the maximum number of edges
of a non-flowerable coin graph on $n$ vertices. 
A {\em flower} is a coin graph representation
of the wheel graph. A collection of coins or discs in 
the Euclidean plane is {\em non-flowerable} if no
flower can be formed by coins from the collection. 

\vspace{3 mm}

\noindent {\bf 2000 MSC:} 
05A15, 
05C35. 

\vspace{2 mm}

\noindent {\bf Keywords:}
planar graph,
coin graph,
flower,
integer programming.
\end{abstract}

\section{Introduction}
\label{sec:intro}

In this article, we will prove a result which gives the maximum number of 
edges in a plane graph on $n$ vertices, where each vertex bounds some 
$\ell$-gon for $\ell \geq k$.  We will find the exact 
upper bound using integer programming and the matching lower bound 
by construction. This problem arose from an investigation of extremal 
coin graphs with multiple radii that satisfy certain conditions 
we will discuss in Section~\ref{sec:non-flowerable}.
Here a {\em coin graph} is a graph whose vertices can be represented as closed, 
non-overlapping disks in the Euclidean plane such that two vertices are adjacent
if and only if their corresponding disks intersect at their boundaries, 
i.e.~they touch. 
 
Coin graphs are ubiquitous in the discrete geometry literature
especially since the most general ones (with no restrictions on the radii
of the coins) are, by a well-known theorem of Thurston~\cite{Thurston97},
precisely the planar graphs\footnote{This theorem can also be 
attributed to Koebe~\cite{Koebe} 
and Andreev~\cite{Andreev}. Koebe's original proof covered 
only the case of fully-triangulated planar graphs. Thurston 
reduced the proof to the previous theorem of Andreev.  
Thurston's proof is of the more general case of all planar graphs.}.
One of the best known extremal problems of coin graphs is perhaps
one posed by Erd\H{o}s~\cite{E46} in 1946 and again by 
Reutter~\cite{Reutter} in 1972: for a given natural number $n$,
determine the maximum number of edges a coin graph can have
if all the coins have the same radius (called {\em unit coin graphs}.)
This problem has an unusually nice solution, due to 
Harborth~\cite{Harborth} from 1974, who showed that the
maximum number of edges is given by 
$T(n) := \left\lfloor 3n- \sqrt{12n-3} \right\rfloor$.  
This problem can be generalized in many ways, as suggested in a recent
excellent survey of open research problems in discrete 
geometry~\cite[p.~222]{Brass05}. For instance it can be generalized
to (i) graphs embedded in other surfaces such as the sphere or
to graphs embedded in $n$-dimensional Euclidean space for $n \geq 3$,
where the definition of a coin graph is modified appropriately to an 
$n$-dimensional sphere graph, or  
(ii) by adding the constraint that no three vertices of the graph can be
collinear forces the maximum degree of any vertex to be 5, leading to
a different upper bound, or  (iii) by defining a similar class of 
graphs by connecting two vertices if and only if their distance $d$ satisfies 
$1 \leq d \leq 1+\epsilon$ for some given small $\epsilon >0$.  
This structure can be pictured as a unit coin graph using elastic disks that 
can stretch some small amount. It is conjectured that for small 
$\epsilon$ (less than 0.15 times the defined unit distance) the 
maximum number of edges is still $T(n)$ as in the case of the
unit coin graph~\cite{Brass05}. Finally, (iv) Swanepoel~\cite{Swanepoel}
recently conjectured that the largest number of edges in a coin graph 
with no triangular faces is given by 
$\left\lfloor 2n-2 \sqrt{n}\right\rfloor$.  All these slight modifications are
still open problems. 
Another natural generalization  of the unit coin graph problem above,
that is not discussed in~\cite{Brass05}, is to allow coins of
more than one possible radius. Brightwell and Scheinerman~\cite{Scheinerman} 
explored integral representations of coin graphs, where the radii of the 
coins can take arbitrary positive integer values. 

The results of this article were inspired by an extremal problem
in the same vein as mentioned above, namely to determine
the exact maximum number of edges in a coin graph on $n$ coins
where their radii is such that no wheel graph can by formed by them.
Here a {\em wheel graph} is formed 
from a simple cycle by connecting one additional central
vertex to each of the vertices of the cycle. A physical 
interpretation of this is to have a collection of $n$ 
coins on the table and their sizes are such that it is
impossible to completely ``surround'' one coin with other
coins such that they all touch. This means that the underlying
plane graph of the coin graph has no vertex that only borders 
triangular faces. The article is organized as follows: in 
Section~\ref{sec:tight},
we first introduce our notation and terminology. Then for given
$n$ and $k$ we compute the tight upper bound of the maximum number of edges
a simple plane graph on $n$ vertices can have, if every vertex
borders some $\ell$-gon where $\ell\geq k$. The lower bound is obtained by
direct construction, whereas the matching upper bound is
obtained with Integer Programming (IP). Unlike many IP problems,
the one we obtain is simple enough to be able to solve completely.
In Section~\ref{sec:non-flowerable}
we give an upper bound for the maximum
number of edges a coin graph on $n$ vertices 
with no induced wheel graphs, and conjecture that this bound is
indeed tight.

\section{The general tight upper bound}
\label{sec:tight} 

\paragraph{Notation and terminology}
The set $\{1,2,3,\ldots\}$ of natural numbers
will be denoted by $\nats$. The set of real numbers is ${\reals}$
and the Euclidean plane ${\reals}^2$, the Cartesian product of two 
copies of ${\reals}$. The complex number field is ${\comps}$.
Unless otherwise stated, all 
graphs in this article will be finite, simple and undirected. 
The cycle graph on $n$ vertices and $n$ edges will be denoted
by $C_n$ and the wheel graph on $k+1$ vertices will be denoted by
$W_k$. 

Our main objective in this section is to prove the 
following theorem.
\begin{theorem}
\label{thm:kgon}
Let $k,n\in\nats$ with $n\geq k \geq 4$.  
The maximum number $E_k(n)$ of edges of a plane graph on
$n$ vertices, where each vertex bounds some $\ell$-gon for
$\ell\geq k$, is given by
\[
E_k(n) = T_k(n) := \left\lfloor \frac{(2k+3)n}{k} -6\right\rfloor- \alpha
\]
where
\begin{eqnarray*}
\alpha 	& = & \left\{ \begin{array}{lll}
  0     & \mbox{if $n \equiv k-1 \pmod k$} \\
\left\lfloor 2-\frac{6}{k}  \right\rfloor & \mbox{if $n \equiv k-2 \pmod k$} \\
\left\lfloor \frac{3 \beta}{k} \right\rfloor & 
          \mbox{if $n \equiv \beta \pmod k$ for $0 \leq \beta \leq k-3$.}
                      \end{array}
              \right.
\end{eqnarray*}
\end{theorem}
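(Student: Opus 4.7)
My plan splits into an upper bound obtained via integer programming and a matching lower bound obtained by explicit construction, as hinted in the abstract.

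For the upper bound, let $G$ realize $E_k(n)$ and write $f_\ell$ for its number of $\ell$-gonal faces. Introduce the two auxiliary quantities $t = \sum_{\ell \geq k} f_\ell$ and $L = \sum_{\ell \geq k} \ell f_\ell$. Double-counting vertex--large-face incidences gives $L \geq n$, while the very definition of ``large'' forces $L \geq kt$. Combined with Euler's formula, the identity $\sum_\ell \ell f_\ell = 2e$, and the trivial estimate $\sum_{\ell < k} \ell f_\ell \geq 3(f - t)$, this yields the master inequality
\[
e \;\leq\; 3n - 6 - (L - 3t),
\]
so $E_k(n) \leq 3n - 6 - m$, where $m$ is the optimum of the integer program ``minimize $L - 3t$ subject to $L \geq \max(n, kt)$ and $L, t$ positive integers''. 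This IP is elementary to solve because $L - 3t$ is piecewise affine in $t$: for $t \leq n/k$ the active constraint $L \geq n$ gives $L - 3t = n - 3t$ (decreasing in $t$), while for $t \geq n/k$ the active constraint $L \geq kt$ gives $L - 3t = (k-3)t$ (increasing in $t$). Writing $n = qk + \beta$ with $0 \leq \beta < k$, the integer optimum is attained at $t = q$ or at $t = q + 1$; a direct comparison shows $t = q$ wins for $0 \leq \beta \leq k - 3$ while $t = q + 1$ wins for $\beta \in \{k-2, k-1\}$. Routine arithmetic in each residue class then collapses $3n - 6 - m$ into the stated closed form $\lfloor (2k+3)n/k - 6 \rfloor - \alpha$, with $\alpha$ equal to $\lfloor 3\beta/k \rfloor$ in the main regime and picking up the stated small corrections at $\beta = k-2, k-1$.

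For the matching lower bound I would exhibit explicit plane graphs. The template is to designate a family of cycles whose union contains every vertex, embed them simultaneously as faces of a plane graph, and make every other face a triangle. For $\beta = 0$, I use $q$ pairwise vertex-disjoint $k$-cycles; for $1 \leq \beta \leq k - 3$, I use $q - 1$ vertex-disjoint $k$-cycles together with one additional vertex-disjoint $(k+\beta)$-cycle; for $\beta \in \{k-2, k-1\}$, I use $q + 1$ $k$-cycles that share $k - \beta$ vertex-incidences so that the total number of distinct vertices is exactly $n$. Designating one distinguished cycle as the outer face, placing the other distinguished cycles inside it as prescribed faces, and fully triangulating the remaining bounded region produces a plane graph in which every vertex lies on a distinguished (hence $\ell$-gonal with $\ell \geq k$) face; the face--edge identity $\sum_\ell \ell f_\ell = 2e$ together with Euler's formula then confirms that the resulting graph has exactly $T_k(n)$ edges, matching the upper bound.

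The main obstacle is the lower bound. The upper bound is essentially careful arithmetic once the IP is set up; the only subtlety there is checking that the three branches in the definition of $\alpha$ really do collapse to the single expression $T_k(n)$ arising from the IP optimum. For the constructions the delicate issue is to arrange the distinguished cycles so that none of them acquires a chord during the triangulation of the surrounding region, and in the exceptional cases $\beta \in \{k-2, k-1\}$ so that cycles may share vertices while each remains individually facial in the planar embedding.
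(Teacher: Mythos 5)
Your proposal is correct and follows essentially the same route as the paper: the identical integer program $\min\{L-3t : L\geq n,\ L\geq kt\}$ for the upper bound (the paper derives it by coning an apex vertex into each large face to get a triangulation, whereas you get the same master inequality directly from Euler's formula and the face--degree sum, a cosmetic difference), solved the same way by comparing $t=q$ with $t=q+1$, and the same lower-bound construction from disjoint $k$-cycles plus one $(k+\beta)$-cycle (the paper realizes your ``overlapping $k$-cycles'' for $\beta\in\{k-2,k-1\}$ by adding one or two chords inside a $C_{2k-2}$ or $C_{2k-1}$). Both your write-up and the paper leave the same details informal, namely verifying that the triangulation of the complementary region introduces no chords of the distinguished cycles.
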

We will show that $T_k(n)$ is both an upper bound and a lower bound 
for $E_k(n)$.
We start with the easier case and show that $T_k(n)$ is lower bound using an 
explicit construction, and we consider each of the three cases, 
$n \equiv k-1, k-2, \beta \pmod k$ where $0 \leq \beta \leq k-3$, 
separately, since each case has a 
unique construction. We then conclude the section with the more involved case
and prove that the matching lower bound 
$T_k(n)$ is also an upper bound.  

\paragraph{The lower bound} 
Write $n=kj +\beta$ where $0\leq \beta \leq k-1$. 
Form $j-1$ disjoint copies of $C_k$ and one copy of $C_{k+\beta}$
in the plane, no cycle containing another cycle, consisting of $n$ edges 
altogether. We need $3(j-1)$ edges to connect the cycles into one connected 
component such that (i) the infinite face is bounded by a simple $n$-cycle and 
(ii) the internal faces of this $n$-cycle other than the $C_k$s and the 
$C_{k+\beta}$ are triangular. Then we add $n-3$ edges to fully triangulate 
the infinite face. The total number of edges thus obtained
is $e(n,j):= n +3(j-1) + (n-3)$. Consider now the various
cases for $\beta$.

{\sc First case: $\beta = k-1$.}
In this case $C_{k+\beta} = C_{2k-1}$ and two additional edges can be added 
to the interior of the cycle $C_{2k-1}$ to create 3 regions, 
2 bounded by $k$-gons and one by a triangle such that every vertex is 
bounded by a $k$-gon.  Add these additional edges 
between appropriate vertices of the cycle $C_{2k-1}$.  
The total number of edges is then given by
\[
e(n,j) + 2 
= 2n + 3j - 4 
= \frac{(2k+3)n}{k} -6 - \left( 1- \frac{3}{k}\right) 
= \left\lfloor \frac{(2k+3)n}{k} -6 \right\rfloor.
\]

{\sc Second case: $\beta = k-2$.}
Here $C_{k+\beta} = C_{2k-2}$ and one additional edge 
can be added to the interior of the cycle $C_{2k-2}$ to create 2 regions 
bounded by $k$-gons. Add this additional edge between appropriate vertices 
of the cycle $C_{2k-2}$. The total number of edges is now given by
\[
e(n,j) + 1 
= 2n + 3j - 5 
= 2n + 3\left(\frac{n+2}{k}\right)-6 -2 
= \frac{(2k+3)n}{k} -6 - \left(2 - \frac{6}{k} \right).
\]
Since for any real numbers $x, y$ with $x-y$ a positive integer we have 
$x-y = \left\lfloor x\right\rfloor- \left\lfloor y\right\rfloor$, 
then this last expression equals 
$\left\lfloor \frac{(2k+3)n}{k} -6\right\rfloor  
-\left\lfloor 2- \frac{6}{k} \right\rfloor$.

{\sc Third case: $0\leq \beta\leq k-3$.}
Here the total number of edges is given by
\[
e(n,j) 
= 2n + 3j -6 
= \frac{(2k+3)n}{k} -6 - \frac{3\beta}{k}
= \left\lfloor \frac{(2k+3)n}{k} -6\right\rfloor - 
\left\lfloor \frac{3\beta}{k}\right\rfloor,
\]
the last step just as in the previous case when $\beta = k-2$.
These three cases show that the mentioned bound $T_k(n)$ can always be reached.

\paragraph{The upper bound}
We will derive the matching upper bound using Integer Programming.
Unlike most integer programs, it turns out that our specific one in this 
case will be simple enough to be able to spot a general pattern to solve
it exactly.

Assume we have a plane graph $G$ on $n$ vertices with the property 
mentioned in the theorem.  The number of edges is $m$ and the number 
of faces is $f$.  Form a new graph $G'$ by adding a vertex inside each 
$\ell$-gon, where $\ell\geq k$ and connect that vertex to all the 
vertices bounding the $\ell$-gon.  Let $n'$, $m'$, and $f'$ be the number 
of vertices, edges, and faces of $G'$.  Note that $G'$ is planar and 
fully triangulated.  For $i \in \{3, \ldots, k-1\}$, let $f_i$ denote 
the number of $i$-sided faces of $G$ and $f_k$ be the number of all 
$\ell$-sided faces where $\ell \geq k$. Then 
$f=f_3 + f_4 + \cdots + f_{k-1} + f_k$.  By assumption we have 
$n'=n+f_4 + \cdots + f_{k-1} + f_k$ and $m'=3n'-6$, by Euler's formula.

Let $d$ be the sum of the degrees of all the vertices that were added 
above, so $d$ also equals the number of edges added to $G$ to obtain $G'$.  
Hence $m'=m+d = 3(n+f_4 + \cdots + f_k)-6$, so 
$m=3n-6-(d-3(f_4 + \cdots +f_{k-1}+f_k))$.  Note that 
$d= d_4 + d_5 + \cdots + d_{k-1} + d_k$ where for each 
$i \in \{4, \ldots, k-1\}$, $d_i$ is the sum of the degrees of the 
vertices of degree $i$ added to $G$ and $d_k$ is the sum of degrees of 
vertices of degree greater than or equal to $k$ added to $G$.  Therefore we 
have $d_i = if_i$ for each $i \in \{4, \ldots, k-1\}$ and so 
$d=4f_4 + \cdots + (k-1)f_{k-1} + d_k$ and hence 
\begin{equation}
\label{eqn:m}
m = 3n-6-(f_4 + 2f_5 + \cdots + (k-3)f_{k-1} + d_k -3f_k).
\end{equation}
Note that $m$ is maximized if $f_4 + 2f_5 + \cdots + (k-3)f_{k-1} + d_k -3f_k$ 
is minimized. Since the conditions are (1) $n \leq d_k$, (2) $f_i \geq 0$ for 
$i \in \{4, \ldots, k\}$, and (3) $kf_k \leq d_k$, we can simplify this 
optimization problem by setting $f_i=0$ for $i = 4,\ldots, k-1$ and the 
problem reduces to minimizing the value of $d_k-3f_k$ over 
nonnegative integers, 
given the constraints $d_k \geq n$ and $k f_k \leq d_k$.
\begin{lemma}
\label{lmm:key}
If $k,n\in\nats$ and $n\geq k \geq 4$ and 
$\mu(n,k):= \min \{x-3y : 
x,y \in \mathbb{N} \cup \{0\}, x \geq n, ky \leq x \}$, 
then
\[
\mu(n,k) = n + \gamma- 3\left\lfloor\frac{n+\gamma}{k}\right\rfloor
\mbox{ where } 
\gamma 	= \left\{ \begin{array}{lll}
                     1    & \mbox{if $n \equiv k-1 \pmod k$} \\
                     2    & \mbox{if $n \equiv k-2 \pmod k$} \\
                     0    & \mbox{otherwise.}
                    \end{array}
          \right.
\]
\end{lemma}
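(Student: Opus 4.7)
The plan is to first eliminate $y$, reducing the problem to a single-variable optimization in $x$, then to exploit the very simple behavior of $f(x) := x - 3\lfloor x/k \rfloor$ as $x$ grows beyond $n$.

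For fixed $x$, the constraint $ky \leq x$ with $y \in \nats\cup\{0\}$ makes $y = \lfloor x/k \rfloor$ optimal (the larger $y$ is, the smaller $x - 3y$ becomes), so $\mu(n,k) = \min_{x \geq n} f(x)$. The crucial observation about $f$ is that $f(x+1) - f(x) = 1$ unless $k \mid (x+1)$, in which case $f(x+1) - f(x) = -2$. Hence $f$ strictly increases on unit steps except at multiples of $k$, where it drops by exactly $2$. In particular, once one passes a multiple of $k$, the cost $k - 3 \geq 1$ of reaching the next multiple is never recovered, so it suffices to compare $f(n)$ with $f\bigl((q+1)k\bigr)$ for $q = \lfloor n/k\rfloor$.

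Writing $n = qk + r$ with $0 \leq r < k$, the change $f\bigl((q+1)k\bigr) - f(n)$ equals $(k - r) - 3$. I would then make a brief case split on $r$: if $r \leq k-3$ the change is nonnegative and the minimum is attained already at $x = n$, giving $\mu(n,k) = n - 3q$; if $r = k-2$ the change is $-1$ and if $r = k-1$ the change is $-2$, so in both of these cases the minimum is attained at $x = (q+1)k$, giving $\mu(n,k) = (q+1)(k-3)$.

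To finish, I would match each of these three values against $n + \gamma - 3\lfloor (n+\gamma)/k\rfloor$ with the stated $\gamma$: for $\gamma = 0$, the formula yields $n - 3q$; for $\gamma = 2$ with $r = k-2$, we have $n + \gamma = (q+1)k$, so the formula returns $(q+1)k - 3(q+1) = (q+1)(k-3)$; and identically for $\gamma = 1$ with $r = k-1$. The only real point requiring care is the case split itself — namely, identifying precisely those residues of $n \pmod k$ for which a small overshoot pays off — and this is exactly what the piecewise definition of $\gamma$ encodes, so no significant obstacle stands in the way.
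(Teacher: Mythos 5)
Your proof is correct, and it takes a genuinely different (and more rigorous) route than the paper. The paper argues geometrically: it draws the feasible region cut out by $x\geq n$ and $ky\leq x$ together with the objective direction $(1,-3)$, and ``spots'' the optimal lattice point from two illustrative figures, asserting that the pattern persists for all $n$ and $k$. You instead eliminate $y$ by observing that $y=\lfloor x/k\rfloor$ is optimal for each fixed $x$, reduce to the one-variable function $f(x)=x-3\lfloor x/k\rfloor$, and analyze its first differences: $+1$ off multiples of $k$ and $-2$ at them, so that consecutive multiples of $k$ differ by $k-3\geq 1$ and the only candidates for the minimum over $x\geq n$ are $x=n$ and $x=(q+1)k$ with $q=\lfloor n/k\rfloor$. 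The comparison $f\bigl((q+1)k\bigr)-f(n)=(k-r)-3$ then yields exactly the three-way split on the residue $r$ that the paper encodes in $\gamma$, and your final identifications ($n+\gamma=(q+1)k$ when $r\in\{k-2,k-1\}$) match the stated formula. What your approach buys is a complete, figure-free verification that the claimed pattern really does hold for all $n\geq k\geq 4$ --- precisely the step the paper leaves to visual inspection; what the paper's approach buys is geometric intuition for why the overshoot to the next multiple of $k$ is worthwhile only for the two largest residues. No gaps.
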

\begin{proof}
Drawing the vector $(1,-3)$ and the lines $x=n$ and $ky = x$ in the Euclidean 
plane ${\reals}^2$,
we can spot the solution to our Integer Program $\mu(n,k)$, since 
the function $x-3y = (1,-3)\cdot (x,y)$, a dot product of two vectors, 
will obtain its minimum value at $x=n$ 
and $y=\left\lfloor\frac{n}{k}\right\rfloor$ in the case of 
$n \equiv i \pmod k$ 
where $i = 0, 1, \ldots, k-3$, and at $x= k\left\lceil \frac{n}{k}\right\rceil$ 
and $y=\left\lceil \frac{n}{k}\right\rceil
=\left\lfloor\frac{x}{k}\right\rfloor$ 
otherwise. The Figures~\ref{fig:n8k6} and~\ref{fig:n7k4} illuminate 
this general 
pattern, which here remains the same for all other values of $n$ and $k$. 
Using the above definition of $\gamma$ in the lemma, we can 
write $x=n + \gamma$ as the $x$-value that will always minimize the function.  
Then we have $y=\left\lfloor\frac{x+\gamma}{k}\right\rfloor$ as the $y$-value 
that will always minimize the function.
\end{proof}
\begin{figure}
\centering
\includegraphics[scale=1]{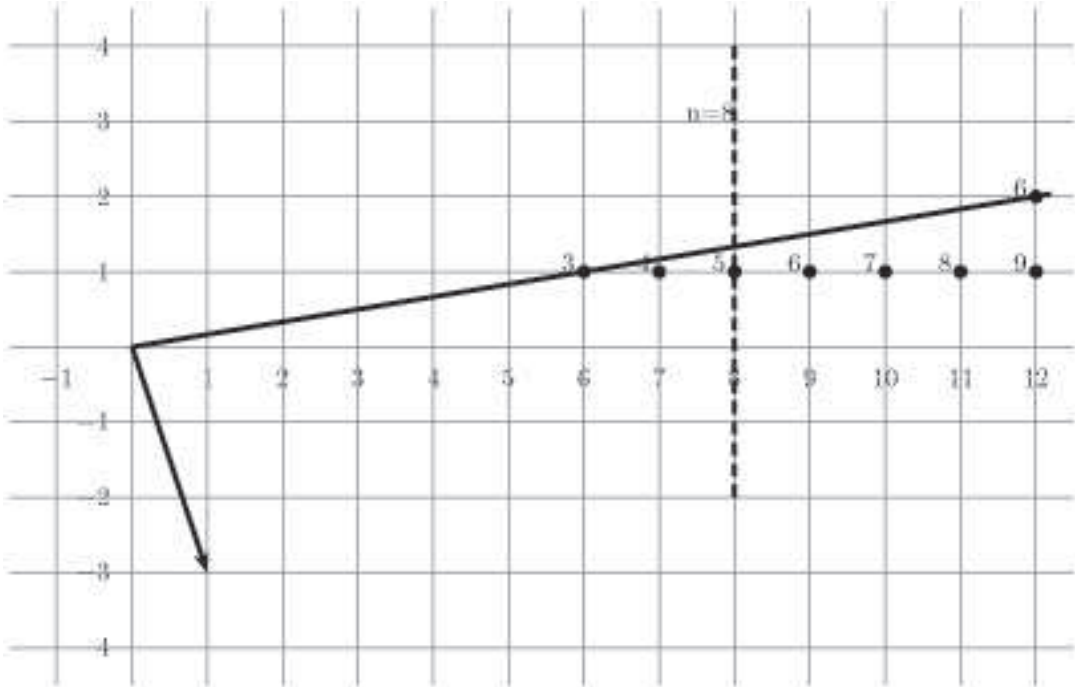} 
\caption[When $k=6$ and $n=8$, the function is minimized at 
$x=8$, $y=1$.]{When $k=6$ and $n=8$, the function is minimized at $x=8$, $y=1$.}
\label{fig:n8k6}
\end{figure}
\begin{figure}
\centering
\includegraphics{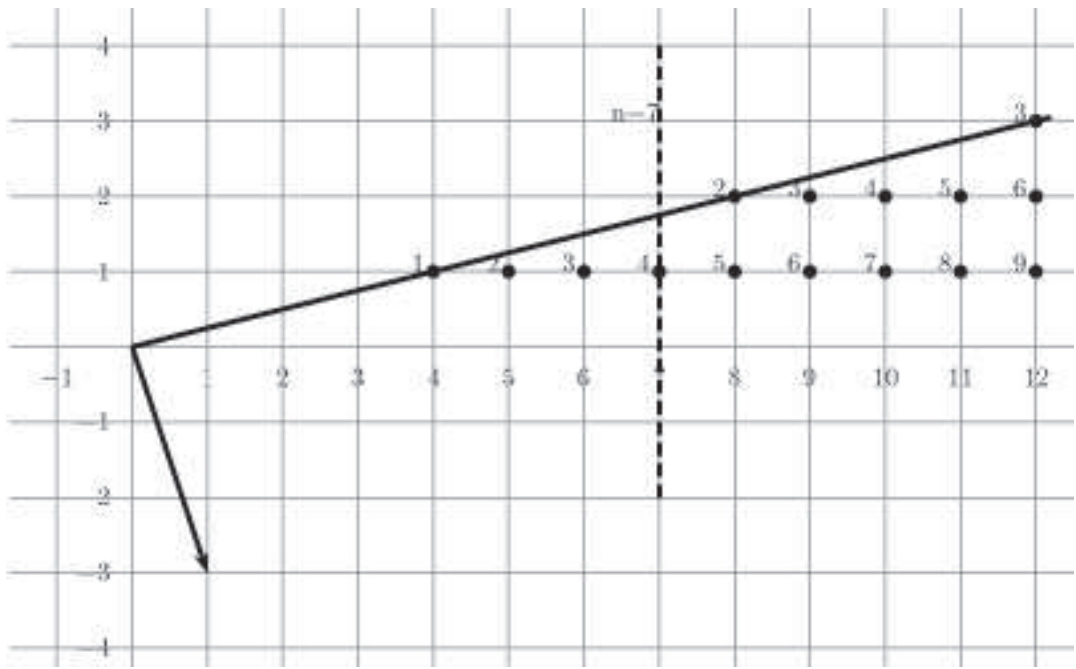} 
\caption[When $k=4$ and $n=7$, the function is minimized at 
$x=8$, $y=2$.]{When $k=4$ and $n=7$, the function is minimized at $x=8$, $y=2$.}
\label{fig:n7k4}
\end{figure}
Continuing to obtain the upper bound of $m$ from (\ref{eqn:m}), we
have by Lemma~\ref{lmm:key} that $d_k-3f_k$ is minimized when 
$d_k=n + \gamma$ and $f_k = \left\lfloor\frac{n+\gamma}{k}\right\rfloor$
and hence we have 
\[
m 
= 3n-6-(d_k-3f_k)
\leq 3n-6- n-\gamma + 3 \left\lfloor \frac{n+\gamma}{k}\right\rfloor
= 2n - 6 + 3 \left\lfloor \frac{n+\gamma}{k}\right\rfloor - \gamma.
\]

If $n \equiv k-1 \pmod k$, then $\gamma = 1$ and
\[
m
\leq 2n -6+3\left\lfloor \frac{n+1}{k}\right\rfloor-1
= \frac{(2k+3)n}{k} - 6- \left(1-\frac{3}{k}\right)
=  \left\lfloor \frac{(2k+3)n}{k} - 6\right\rfloor.
\]

If $n \equiv k-2 \pmod k$ then $\gamma =2$ and
\[
m
\leq 2n - 6 + 3 \left\lfloor \frac{n+2}{k}\right\rfloor - 2
= \frac{(2k+3)n}{k} - 6+ \left(2-\frac{6}{k}\right)
= \left\lfloor \frac{(2k+3)n}{k} - 6\right\rfloor + 
\left\lfloor 2-\frac{6}{k}\right\rfloor.
\]

If $n \equiv \beta$ where $\beta \in \{0, 1, \ldots, k-3\}$ then 
$\gamma = 0$ and 
\[
m
\leq 2n -6+3\left\lfloor \frac{n}{k}\right\rfloor
= 2n-6+\left( \frac{n-\beta}{k}\right)
=  \frac{(2k+3)n}{k} - 6 - \left(\frac{3\beta}{k}\right)
=  \left\lfloor \frac{(2k+3)n}{k} - 6 \right\rfloor- 
              \left\lfloor \frac{3\beta}{k}\right\rfloor.
\]
The above three cases show that $m\leq T_k(n)$, the matching lower
bound. This proves Theorem~\ref{thm:kgon} that $E_k(n) = T_k(n)$.

In the especially interesting case when $k=4$, the discrepancy term 
$\alpha \in \{0, \left\lfloor 2- \frac{6}{k}\right\rfloor, 
\left\lfloor \frac{3\beta}{k}\right\rfloor\}$ for 
$0 \leq \beta \leq k-3$, will be 0 in all cases, 
and hence we obtain the following:
\begin{corollary}
\label{cor:E4}
The maximum number of edges $E_4(n)$ of a plane graph on
$n$ vertices, where each vertex bounds some $\ell$-gon for
$\ell\geq 4$, is given by
\[
E_4(n) =  \left\lfloor \frac{11}{4}n - 6\right\rfloor.
\]
\end{corollary}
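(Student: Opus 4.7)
The plan is to specialize Theorem~\ref{thm:kgon} to $k=4$ and verify that the discrepancy term $\alpha$ vanishes in every residue class modulo $4$. With $k=4$, the main term becomes $\lfloor (2k+3)n/k - 6 \rfloor = \lfloor 11n/4 - 6 \rfloor$, so the corollary reduces to the purely arithmetic claim that $\alpha = 0$ whenever $n \geq 4$.

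I would then check each branch of the piecewise definition of $\alpha$ against the four possible residues of $n$ modulo $4$. In the branch $n \equiv k-1 \equiv 3 \pmod 4$ the theorem directly assigns $\alpha = 0$. In the branch $n \equiv k-2 \equiv 2 \pmod 4$ one computes $\alpha = \lfloor 2 - 6/4 \rfloor = \lfloor 1/2 \rfloor = 0$. Finally, the range $0 \leq \beta \leq k-3 = 1$ covers only the residues $\beta \in \{0,1\}$, giving $\alpha = \lfloor 0 \rfloor = 0$ and $\alpha = \lfloor 3/4 \rfloor = 0$ respectively. Since every residue modulo $4$ is accounted for and yields $\alpha = 0$, the closed form $E_4(n) = \lfloor 11n/4 - 6 \rfloor$ follows immediately from Theorem~\ref{thm:kgon}.

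There is no real obstacle here; the corollary is a direct specialization, and the only point requiring minor care is matching each of the four residues modulo $4$ with the correct case of the three-part definition of $\alpha$, since the ``generic'' range $0 \leq \beta \leq k-3$ happens to collapse to just $\{0,1\}$ when $k=4$. The reason $\alpha$ collapses to $0$ is essentially that $k=4$ is small enough that the two floor expressions $\lfloor 2 - 6/k \rfloor$ and $\lfloor 3\beta/k \rfloor$ (for the relevant $\beta \leq 1$) are all strictly less than $1$; this is the numerical coincidence singled out in the statement preceding the corollary.
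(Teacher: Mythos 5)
Your proposal is correct and follows exactly the paper's route: the paper likewise obtains the corollary by observing that for $k=4$ the discrepancy term $\alpha$ vanishes in every branch, since $\left\lfloor 2-\frac{6}{4}\right\rfloor = 0$ and $\left\lfloor \frac{3\beta}{4}\right\rfloor = 0$ for $\beta\in\{0,1\}$. Your version merely writes out the residue-by-residue check that the paper leaves implicit.
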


\section{Non-flowerable coins}
\label{sec:non-flowerable}

A coin graph representation of the wheel graph is called 
a {\em flower}. A coin graph with no flowers is {\em non-flowered}
and a collection $\mathcal{C}$ of coins is {\em non-flowerable} if no
flower can be formed by coins from $\mathcal{C}$. This terminology
is consistent with that found in~\cite{Ste}. Note that it is not
necessary for a non-flowerable collection to contain coins
of distinct radii, but it cannot contain seven or more coins of 
equal radii, since seven coins with the same radius can form a 
regular hexagonal flower. 
\begin{definition}
\label{def:collection}
For $n\in\nats$ denote by $\widetilde{\mathcal{NF}}(n)$ the set of all 
non-flowerable collections of $n$ coins. For each 
$\mathcal{C}\in\widetilde{\mathcal{NF}}(n)$ let $NF(\mathcal{C})$ 
denote the maximum number of edges
of a coin graph formed from coins in $\mathcal{C}$. Finally let
\[
NF(n) = \max(\{ NF(\mathcal{C}) : \mathcal{C}\in\widetilde{\mathcal{NF}}(n)\}).
\]
\end{definition}
Note that every non-flowered coin graph must have each coin 
bounded by an $\ell$-gon where $\ell\geq 4$. Hence, by
Corollary~\ref{cor:E4} we obtain the following corollary.
\begin{corollary}
\label{cor:NF}
For $n\in\nats$ we have
\[
NF(n) \leq E_4(n) =  \left\lfloor \frac{11}{4}n - 6\right\rfloor.
\]  
\end{corollary}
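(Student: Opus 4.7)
The proposed proof reduces immediately to Corollary~\ref{cor:E4} once we convert non-flowerability into the face-size hypothesis of Theorem~\ref{thm:kgon} with $k=4$. First I would fix an arbitrary $\mathcal{C}\in\widetilde{\mathcal{NF}}(n)$ together with a coin graph $G$ drawn from the coins of $\mathcal{C}$ that realizes $NF(\mathcal{C})$; the physical positions of the coins endow $G$ with a canonical planar embedding on $n$ vertices.

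The core step is to verify the combinatorial translation: in this embedding, every vertex of $G$ bounds at least one $\ell$-gon with $\ell\geq 4$. Suppose instead that some vertex $v$ were incident only to triangular faces. Reading its incident faces cyclically, the neighbors $u_1,\ldots,u_k$ of $v$ would form a cycle with $u_iu_{i+1}\in E(G)$ for each $i$ (indices mod $k$), since each triangular face at $v$ contributes such an edge. Consequently the coins corresponding to $\{v,u_1,\ldots,u_k\}\subseteq \mathcal{C}$ would realize the wheel $W_k$ as a coin graph, i.e., a flower drawn entirely from $\mathcal{C}$, contradicting $\mathcal{C}\in\widetilde{\mathcal{NF}}(n)$.

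With the hypothesis of Corollary~\ref{cor:E4} now verified for $G$, I would conclude
\[
NF(\mathcal{C}) \;=\; |E(G)| \;\leq\; E_4(n) \;=\; \left\lfloor \frac{11}{4}n - 6 \right\rfloor,
\]
and since $\mathcal{C}$ was arbitrary in $\widetilde{\mathcal{NF}}(n)$, taking the maximum over such collections gives $NF(n)\leq E_4(n)$, as claimed.

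The only substantive point worth flagging is making the geometric reduction honest: when $v$ is locally triangulated, the adjacencies $u_iu_{i+1}$ must be literal coin tangencies (not just abstract edges of a plane graph) so that $\{v,u_1,\ldots,u_k\}$ is genuinely a coin representation of $W_k$, and hence a flower in the sense of the definition. This is immediate from the definition of a coin graph, since every edge of $G$ is by construction a tangency between two coins of $\mathcal{C}$; beyond this observation, the argument is purely a specialization of the previous section.
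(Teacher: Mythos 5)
Your argument is correct and is essentially the paper's own: the paper justifies the corollary with the single observation that every non-flowered coin graph has each vertex bounding some $\ell$-gon with $\ell\geq 4$, and then invokes Corollary~\ref{cor:E4}. You have simply expanded that observation into its full contrapositive form (a locally triangulated vertex yields a coin representation of a wheel, i.e., a flower from $\mathcal{C}$), which is the intended reasoning.
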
 
Whether $NF(n) = E_4(n)$ or not is unknown to us as of writing
this article. 
\begin{conjecture}
\label{conj:NF}
For $n\in\nats$ we have 
$NF(n) = E_4(n) =  \left\lfloor \frac{11}{4}n - 6\right\rfloor$.
\end{conjecture}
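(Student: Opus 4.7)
The upper bound $NF(n) \leq \lfloor 11n/4 - 6 \rfloor$ is already supplied by Corollary~\ref{cor:NF}, so the task is to exhibit, for every $n \in \nats$, a non-flowerable collection of $n$ coins whose coin graph attains exactly $\lfloor 11n/4 - 6 \rfloor$ edges.

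The plan is to start from the plane graph $G$ produced by the lower-bound construction in the proof of Theorem~\ref{thm:kgon} with $k=4$. This $G$ is planar on $n$ vertices, has $\lfloor 11n/4 - 6 \rfloor$ edges, and in its natural embedding every vertex bounds some face of size $\geq 4$. I would then invoke the Koebe--Andreev--Thurston circle packing theorem to realize $G$ as a coin graph. The key observation is that a coin graph is non-flowered if and only if, in the plane embedding induced by the coin arrangement, every vertex lies on at least one non-triangular face: a vertex $v$ is the hub of a flower precisely when its neighbors in cyclic order form a cycle in $G$, and this happens exactly when every face incident to $v$ is a triangle. Consequently, if the circle packing induces the ``correct'' embedding of $G$, the resulting collection is automatically non-flowerable.

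Controlling the embedding is the crux of the argument. If $G$ is $3$-connected, then by Whitney's theorem its planar embedding is unique up to reflection, and the Koebe--Andreev--Thurston representation automatically produces the face structure of the construction. I would therefore either verify directly, or slightly adjust the construction to guarantee, that the $j-1$ copies of $C_4$ and the single $C_{4+\beta}$, together with the surrounding triangulation, are glued so that $G$ has no $2$-vertex separator. This appears feasible because the non-triangular faces can be distributed in ``generic'' position and each attached to the exterior by three pairwise non-crossing edges whose endpoints are well spread along the bounding cycle.

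The main obstacle I expect is handling the residue classes $n \bmod 4$ and the small-$n$ boundary cases where the construction can degenerate and $3$-connectivity may fail. A backup strategy in those cases is to bypass the general circle packing theorem and construct the coin arrangement directly: place a few moderately-sized coins realizing each cycle $C_4$ or $C_{4+\beta}$, then fill the surrounding triangular regions by progressively smaller coins, each touching only near a corner of its containing region. Ensuring that the prescribed combinatorial adjacencies are realized without introducing any additional tangency -- which could accidentally produce a flower -- is the delicate geometric step, and I suspect it is exactly the reason the statement is left as a conjecture rather than a theorem.
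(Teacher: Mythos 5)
This statement is left as a conjecture in the paper precisely because the approach you outline does not close the argument, and the gap is not the one you identify. Your sketch, even granting full control of the embedding produced by the Koebe--Andreev--Thurston theorem, establishes only that the resulting coin graph is \emph{non-flowered}: the particular arrangement of the $n$ coins contains no flower. But $NF(n)$ is defined (Definition~\ref{def:collection}) as a maximum over non-flowerable \emph{collections} $\mathcal{C}$, where non-flowerable means that no flower can be formed by \emph{any} subset of the coins in $\mathcal{C}$, rearranged arbitrarily in the plane. These are different conditions. The circle packing theorem gives you no control over the radii it outputs; for instance, if seven of the coins it produces happen to have equal radii, the collection is flowerable (they form a regular hexagonal flower) no matter how innocently they are arranged in your packing. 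Your criterion ``every vertex lies on a non-triangular face'' certifies non-floweredness of the arrangement, not non-flowerability of the multiset of radii.

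This is exactly the obstruction the paper names in its closing Remark: the construction for the lower bound of Theorem~\ref{thm:kgon} can be realized as a coin graph that is guaranteed non-flowered, but whether the underlying collection is non-flowerable is unknown, since some flower could be formed by a subset of the coins after rearrangement. The paper's suggested (and unexecuted) repair is to post-compose with a M\"obius inversion $z\mapsto 1/z$, which preserves the coin-graph structure while redistributing the radii, in the hope of landing on a radius multiset that admits no flower at all. Your backup strategy of hand-placing coins with ``progressively smaller'' radii is actually closer to a viable fix than the circle-packing route, since there you could in principle force all radii to be distinct and rapidly decreasing so that no subset can mutually surround a coin -- but you would then need a quantitative lemma characterizing which radius multisets are non-flowerable, and that is the missing ingredient. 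As written, your argument proves $NF(\mathcal{C})$ attains the bound for some non-flowered \emph{graph}, not for some non-flowerable \emph{collection} $\mathcal{C}\in\widetilde{\mathcal{NF}}(n)$, so it does not prove the conjecture.
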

{\sc Remark:}
Given $n\in\nats$. By Thurston's theorem~\cite{Thurston97}
we can obtain a coin graph representation of each of the planar
graphs constructed for the lower bound 
of Theorem~\ref{thm:kgon} in Section~\ref{sec:tight}.
By construction it is guaranteed that it will be non-flowered.
However, we do not know if the the underlying {\em collection} of coins  
used in this representation is non-flowerable,
since some flower could be formed by a subset of them.  We do
suspect that each such coin graph representation of the graphs formed
for the lower bound in Theorem~\ref{thm:kgon} can be represented
by a non-flowerable collection of coins: Recall that the map
$\comps \rightarrow \comps$ given by $z\mapsto 1/z$ is 
an {\em inversion} about the unit circle centered at origin.
It is known fact in plane geometry that every inversion of the
complex plane maps a coin graph to another coin graph with
the same underlying planar graph. However, the radii of coins
have all changed. We suspect that a proof of 
Conjecture \ref{conj:NF} can be obtained by inverting
a carefully chosen embedding of a coin graph on $n$ coin 
with the maximum number $E_4(n)$ of edges, resulting in a representation
using non-flowerable collection of coins. However, as far as our investigation
goes, we will stop here for the moment.

\subsection*{Acknowledgments}  

Thanks to Konrad J.~Swanepoel for related references.

\flushright{\today}

\end{document}